	\DeclareMathOperator*{\argmax}{argmax} % thin space, limits underneath in displays % Diesen Befehl brauche ich um argmax ordentlich texen zu können.
     	\newcommand{\tod}{\xrightarrow{d}}
	\newcommand{\topr}{\xrightarrow{\mathbb{P}}}
     	\newcommand{\ton}{\xrightarrow{n \to \infty}}
	\newcommand{\sgn}{\operatorname{sgn}}
	\newcommand{\R}{\mathbb{R}}
		\newtheoremstyle{Format1}
		{\topsep}   % ABOVESPACE
		{\topsep}   % BELOWSPACE
		{\normalfont}  % BODYFONT
		{0pt}       % INDENT (empty value is the same as 0pt)  % Das rückt den Kopf nach rechts ein
		{\bfseries} % HEADFONT
		   {\newline}  % HEADPUNCT
		{5pt plus 1pt minus 1pt} % HEADSPACE
		{}          % CUSTOM-HEAD-SPEC 
		\theoremstyle{Format1} % Alle Theoremumgebungen hierunter folgen den Spezifikationen vom "Format 1" - Theoremstil         
		\newtheorem{Def}{Definition}[section]       % Definition
        \newtheorem{Lem}[Def]{Lemma}                % Lemma
        \newtheorem{Thm}[Def]{Theorem}
	\newtheorem{ex}[Def]{Example}
        \DeclarePairedDelimiter\abs{\lvert}{\rvert}%
        \DeclarePairedDelimiter\norm{\lVert}{\rVert}%
        \let\oldabs\abs
        \def\abs{\@ifstar{\oldabs}{\oldabs*}}
        \let\oldnorm\norm
        \def\norm{\@ifstar{\oldnorm}{\oldnorm*}}
	\title{Testing equivalence of multinomial distributions -   a constrained bootstrap approach }
\author{
  {\normalsize Patrick Bastian, Holger Dette,  Lukas Koletzko} \\
{\normalsize  Ruhr-Universit\"at Bochum} \\
{\normalsize  Fakult\"at f\"ur Mathematik} \\
{\normalsize  44780 Bochum, Germany}
}
\date{}
\begin{document}

\maketitle

        \begin{abstract}
    In this paper we develop a  novel bootstrap test for the comparison of two multinomial distributions. The two distributions are called {\it equivalent} or {\it similar} if a  norm of the difference between the class probabilities is  smaller than a given threshold. In contrast to most of the literature our approach does not require differentiability of the norm and is in particular applicable for the maximum- and $L^1$-norm.
        \end{abstract}
Keywords: equivalence, multinomial distribution, constrained bootstrap, directional differentiability 
\section{Introduction}
        	\label{sec1} 
  \def\theequation{1.\arabic{equation}}	
  \setcounter{equation}{0}

In recent years there has been considerable interest in testing equivalence hypotheses regarding the parameters of a multinomial distribution \citep[see][]{wellek2010testing, ostr2017,OSTROVSKI201861,Alba-Fernndez2023}. Most authors concentrate on the one sample problem, where $X = (X_1,  \ldots  , X_k)^\top  $
denotes a $k$-dimensional multinomial distributed vector with $n_1$ trials and   success probabilities $p= (p_1,  \ldots  p_k)^\top $ (with $\sum_{i=1}^k p_i=1$), that is $X
\sim  {\rm Multi}(n_1, p_1,  \ldots  , p_k)$,  and are interested in testing the hypothesis
\begin{align}
                \label{hyp}
                    H_0: d(p, p^{(0)} ) \geq \epsilon \quad vs. \quad H_1: d(p,  p^{(0)}) < \epsilon,
                \end{align}
                where  $d$ is a distance measure between two probability distributions on the set $\{1, \ldots , k\} $,
                $  p^{(0)} =(p_1^{(0)},  \ldots  p_k^{(0)})^\top  $ is a given  vector of probabilities 
                ctor with $n_1$ trials and   success probabilities $p= (p_1,  \ldots  p_k)^\top $ (with $\sum_{i=1}^k p_i^{(0)}=1$) and  $\epsilon >0$ is prespecified threshold.

                These papers mainly differ in the distances that are used in \eqref{hyp} and in the way how the quantiles of a test, which rejects the null hypothesis for small values of an appropriate estimate
                %, say $d(\hat p, p^{(0)} ) $, 
                of $d(p, p^{(0)} ) $, are computed. 
            \cite{wellek2010testing} considered the Euclidean distance, \cite{ostr2017} a smoothed total variation distance, \cite{OSTROVSKI201861}  continuously differentiable distances
            in general  and \cite{Alba-Fernndez2023} differentiable $\phi$-divergences. Quantiles are either derived by asymptotic theory (using the asymptotic normality of the  estimate  $\hat p$ and the delta method) or by different variants of the bootstrap. %reflecting the structure of the composite hypotheses in \eqref{hyp}. 
            We also mention the work of \cite{frey2009exact}, who proposed to measure the distance between $p$ and $p^{(0)}$ by the maximum deviation between the cumulative distributions. However, as pointed out by this author such an approach is not invariant with respect to permutation of the classes. A common feature of all papers on testing the hypotheses \eqref{hyp} consists in the fact that the distance $d$ is differentiable. On the one hand this simplifies the application of the delta method and  yields an asymptotic normal distribution of the  statistic $ T_{n_1}= \sqrt{n_1} ( d(\hat p, p^{(0)})  - d(p, p^{(0)} ) )$. On the other hand, if the sample size is small and resampling is used to calculate critical values, the results  of \cite{fang2019inference} indicate that differentiability is sufficient and necessary  to prove  that the bootstrap statistic $
            T_{n_1}^* = \sqrt{n_1} ( d(\hat p^*, p^{(0)})  - d( \hat p, p^{(0)} ) ))$ has (conditionally on the data) the same limit distribution as $T_{n_1}$, which is the commonly used argument for proving bootstrap consistency.
                
{\bf Our contribution:}  The present paper contributes to  this discussion and considers the problem of equivalence testing for a general class of distances, which are not necessarily differentiable. This class includes 
the important cases of the maximum deviation and $L^1$-distance, which are not  differentiable functions of $\mathbb{R}^k$. 
Despite the negative results of \cite{fang2019inference} we will develop a constrained bootstrap test for the hypotheses \eqref{hyp}
and prove its validity. The price that we pay for this result consists in the fact that in the case of non-differentiabiltiy the tests are in some cases slightly conservative.
We also illustrate the validity of our approach  for finite sample sizes by means of small simulation study. In contrast to the literature we consider the two-sample problem.
%, which is motivated by an application in \textbf{\red{??}}. 
However, the results can easily be transferred to the one-sample problem
as considered in the cited references.
We emphasize that our approach also provides an alternative test for the distances which have been considered so far in the literature. In other words: the test  is applicable, independently of the distance under consideration as long as this distance is directionally Hadamard  differentiable in the sense of Definition 2.1 in \cite{fang2019inference} with a sublinear  derivative, and these assumptions are satisfied for all distances which have been considered so far in the literature.

\section{Comparing two multinomial distributions}

\label{sec2} 
  \def\theequation{2.\arabic{equation}}	
    \setcounter{equation}{0}
  
         Let $X = (X_1,  \ldots  , X_k) \sim 
         {\rm Multi}(n_1, p_1,  \ldots  p_k)$,  $Y = (Y_1,  \ldots  , Y_k) \sim {\rm Multi}(n_2, q_1,  \ldots  , q_k)$ be two independent  multinomial distributions with unknown probability vectors $p = (p_1,  \ldots  , p_k)^\top$ and  $ q = (q_1,  \ldots  , q_k)^\top  \in (0,1)^k, $
         respectively ($k \in \mathbb{N}$). We are interested in testing the hypotheses in \eqref{hyp} (with $p^{(0)}$ replaced by $q$), where the distance $d:  \mathbb{R}^k \times \mathbb{R}^k  \to \mathbb{R} $ is defined by a norm on $\mathbb{R}^k $, that is $d(p,q) = \| p - q \| $, which is 
         directionally Hadamard  differentiable in the sense of Definition 2.1 of \cite{fang2019inference}. This includes the  important cases of the $L_1$-distance and maximum deviation distance which will be discussed in Example \ref{ex1}.

In Algorithm \ref{alg1} below  we define a constrained parametric bootstrap test for the hypotheses 
\begin{align}
                \label{hyp1}
                    H_0:  \| p - q \|  \geq \epsilon \quad vs. \quad H_1: \| p - q \|  < \epsilon ~. 
                \end{align}
                For this purpose we denote by
\begin{align}
    \label{d3}
    L(p,q) = \binom{n_1}{x_1, ... , x_k} \binom{n_2}{y_1, ... , y_k} \prod_{i=1}^k p_i^{x_i} q_i^{y_i} 
\end{align}
the likelihood function of  the distribution of the vector $\big ( X^\top , Y^\top \big )^\top $ at $(x^\top,y^\top)^\top$ and denote by $(\hat p , \hat q)$ the corresponding maximum likelihood estimator (MLE). Our main result shows that 
this procedure defines a valid test for the hypotheses \eqref{hyp1}. For this purpose we recall that the function 
$\| \cdot \| : \mathbb{R}^k  \to  \mathbb{R} $ is called {\it directionally} Hadamard  differentiable at the vector $\theta= (p -  q  )^\top \in \R^k $
if  there exists a continuous mapping 
$ d_\theta^\prime    :  \mathbb{R}^k   \to  \mathbb{R} $ such that 
$      \lim_{n \to \infty} \big | \frac{1}{ t_n } ( \| \theta + t_n h_n \|  -  \| \theta \| ) - d_\theta^\prime  ( h )  \big  | = 0$
%\begin{align}
%        \lim_{n \to \infty} \Big | \dfrac{ \| \theta + t_n h_n \|  -  \| \theta \| }{ t_n } - d_\theta^\prime  ( h )  \Big  | = 0
%    \end{align}
for all sequences $(h_n)_{n \in \mathbb{N}} \subset \R^k$ with $h_n \to h \in \R^k$  and 
all sequences $(t_n )_{n \in \mathbb{N}} \subset \mathbb{R}^+$ with $t_n \downarrow 0$.  Moreover, if $d_\theta^\prime $  is linear the functional $\| \cdot \|$ is called Hadamard differentiable at the vector $\theta$.

Algorithm \ref{alg1} below defines a bootstrap test for the hypotheses \eqref{hyp1}. Our main result shows that this test  has asymptotic level $\alpha$ and is   consistent. For this purpose we define by $n=n_1 + n_2$ the total sample size and note that a standard argument shows that
$
                    \sqrt{n} \big( ( \hat p - \hat q ) - (p - q) \big) \xrightarrow{d} Z  ,
$
 where we assume  $\lim_{n_1, n_2 \to \infty} n_2/n_1  >0 $, 
 the symbol $\xrightarrow{d} $ means weak convergence and $Z$ denotes 
 a centered $k$-dimensional normal distributed random variable with (degenerate) covariance matrix $\Sigma$.
 %, that is $Z \sim \mathcal{N}_{k}(0, \Sigma) $.
The delta method for directionally differentiable mappings \citep[see, for example, Theorem 2.1 in][]{shapiro1991asymptotic}
then gives 
 \begin{align}
                    \label{l2}
                       \sqrt{n}  \big ( \| \hat p - \hat q \| - \| p -q \| \big ) \xrightarrow{d}  T:= d_{p-q}^\prime (Z) ~.
\end{align}

\begin{algorithm}[H]
                \small
                \caption{Constrained parametric bootstrap test  }
                \label{alg1}
                    \begin{enumerate}
                        \item Calculate the MLE $ (\hat p, \hat q)$ maximizing \eqref{d3} and   test statistic 
                        $
                            \hat d   =  \|  \hat  p - \hat  q \| . $
                        \item Define the {\it constrained MLE}  $(\tilde{p}, \tilde{q})$ by 
                        $
                          ( \tilde{p}  , \tilde{q}  )^\top  = 
                          \argmax_{
                           \| p-  q \| = \epsilon } L(p,q)
                     $
                        and calculate 
                        \begin{align}
                            (\hat{\hat{p}}, \hat{\hat{q}})^\top  = \begin{cases}
                            (\hat p, \hat q)^\top &  
                            \text{ if } ~\hat d \geq \epsilon \\
                             (\tilde{p}, \tilde{q} )^\top  & \text{ if } ~\hat d < \epsilon.
                            \end{cases} 
                        \end{align}
                        
                        \item For $b=1, \ldots B$  do 
                        \begin{enumerate}
                            \item Generate $X^{* (b) }  \sim {{\rm Multi}}(n_1, \hat{\hat{p}})$, $ Y^{*(b) }  \sim {{\rm Multi}}(n_2, \hat{\hat{q}})$ and the corresponding MLE
                            $({\hat p^{*(b)} },  {\hat q^{*(b) }} )^\top  $.
                            \item Calculate the bootstrap test statistic  
                         %   \begin{align} \label{d4}
                           $  \hat d^{*(b) } =  \big  \|  \hat p^{*(b) } -  \hat q^{*(b) } \big \|.$
                         %    \end{align}
                        \end{enumerate}
                        \item Calculate the empirical $\alpha$-quantile $\hat q_{\alpha}^{*(B)}$ of the bootstrap sample $\hat{d}^{*(1)},  \ldots  , \hat{d}^{*(B)}$
                        and reject $H_0$ whenever
                    \begin{align}
                    \label{test}
                            \hat d < \hat q_{\alpha}^{*(B)}.
                        \end{align}
                    \end{enumerate}
                \end{algorithm}

\begin{Thm}
\label{boot1}
{ \it 
Assume that the distribution of the random variable 
$T$ in \eqref{l2} is continuous and that the 
level  $\alpha$ is sufficiently  small 
such that the corresponding  $\alpha$-quantile   is negative. 
Let  $\hat q_\alpha^*$
denote the $\alpha$-quantile of the distribution of the statistic $\hat d^{*(b) } $. Furthermore,  assume that 
 $\lim_{n_1, n_2 \to \infty} n_2/n_1  >0 $ and that the norm $ \| \cdot \| :
 \mathbb{R}^k \to \mathbb{R}$ is directionally Hadamard  differentiable at $(p- q ) $  with a sublinear derivative.

(a)  If the null hypothesis  in \eqref{hyp1} is satisfied, then
\begin{align} 
\label{d5}
        \limsup_{n_1, n_2 \to \infty} \mathbb P ( \hat d  <  \hat q_{\alpha}^{*} ) \begin{cases}
            = 0,  \text{ if } ~\| p-q\|  > \epsilon, \\
            \leq \alpha, \text{ if } ~ \| p-q\|  = \epsilon.
        \end{cases}
    \end{align}
(b) If  $H_1$ holds, then 
 %   \begin{align} \label{d6} 
$      \lim_{n_1, n_2 \to \infty} \mathbb P( \hat d <  \hat q_{\alpha}^{*} ) = 1. $
 %   \end{align}
 \smallskip
 
(c) Moreover, if  the norm $ \| \cdot \| $ is  Hadamard  differentiable 
at $p-q$, then \eqref{d5} holds  under the null hypothesis and  additionally $\lim_{n_1, n_2 \to \infty} \mathbb P( \hat d <  \hat q_{\alpha}^{*} ) = \alpha  $,
if $\| p-q\|  = \epsilon $.
 %   \begin{align} \label{d7} 
%        \lim_{n_1, n_2 \to \infty} \mathbb P ( \hat d  <  \hat q_{\alpha}^{*} ) = \begin{cases}
%            0,  \text{ if } ~\| p-q\|  > \epsilon, \\
%            \alpha, \text{ if } ~ \| p-q\|  = \epsilon.
%        \end{cases}
%    \end{align}
} 
\end{Thm}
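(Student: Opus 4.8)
The plan is to derive one master bound on the bootstrap quantile $\hat q_\alpha^*$ and then to read off (a)--(c). Throughout write $\theta=p-q$, $\hat\theta=\hat p-\hat q$, $\hat{\hat\theta}=\hat{\hat p}-\hat{\hat q}$, $\hat\theta^{*(b)}=\hat p^{*(b)}-\hat q^{*(b)}$, $Z_n^{*(b)}=\sqrt n(\hat\theta^{*(b)}-\hat{\hat\theta})$, and $q_\alpha(\cdot)$ for the $\alpha$-quantile. First I would record the routine ingredients. (i) A triangular-array central limit theorem for multinomial sampling, together with $\hat{\hat p}\to p$ and $\hat{\hat q}\to q$ in probability, gives the conditional bootstrap CLT $Z_n^{*(b)}\xrightarrow{d}Z$ in probability. (ii) Consistency of the constrained MLE: since $(p,q)$ maximises the limiting Kullback--Leibler type criterion over $\{\|p-q\|=\epsilon\}$ whenever it lies in this set, $(\tilde p,\tilde q)\to(p,q)$ in probability if $\|\theta\|=\epsilon$; if $\|\theta\|\neq\epsilon$ we only use $\|\tilde p-\tilde q\|=\epsilon$, which is the constraint itself. (iii) The identity $\|\hat{\hat\theta}\|=\max(\hat d,\epsilon)$, immediate from the two cases defining $(\hat{\hat p},\hat{\hat q})$ in Algorithm~\ref{alg1}. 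Consequently $\hat{\hat\theta}\to\theta$ in probability in every regime, and in the regimes $\|\theta\|\ge\epsilon$ (where $\theta\neq0$) the vector $\hat{\hat\theta}$ stays in a compact subset of $\mathbb{R}^k\setminus\{0\}$ with probability tending to one.

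The master bound is
\[
\sqrt n\bigl(\hat d^{*(b)}-\|\hat{\hat\theta}\|\bigr)\ \le\ d'_\theta\bigl(Z_n^{*(b)}\bigr)+o_{\mathbb P^*}(1)\qquad\text{(conditionally, in probability)},
\]
which -- combined with (iii), the positive homogeneity of $d'_\theta$, and $d'_\theta(Z_n^{*(b)})\xrightarrow{d}d'_\theta(Z)=T$ conditionally ($T$ continuous by hypothesis) -- yields $\hat q_\alpha^*\le\max(\hat d,\epsilon)+n^{-1/2}(q_\alpha(T)+o_{\mathbb P}(1))$. To prove the display I would use that a norm is convex, hence $\|x+v\|-\|x\|\ge d'_x(v)$ for all $v$, and that its subdifferential map $x\mapsto\partial\|\cdot\|(x)$ is upper semicontinuous, which on a compact neighbourhood of $\theta$ bounded away from $0$ gives a uniform one-sided first-order expansion: for every $\eta>0$ there is $\gamma>0$ such that $\|x+v\|-\|x\|\le d'_\theta(v)+\eta\|v\|$ whenever $|x-\theta|<\gamma$ and $\|v\|<\gamma$. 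Applying the latter with $x=\hat{\hat\theta}$ and $v=n^{-1/2}Z_n^{*(b)}$, using the stochastic boundedness of $\|Z_n^{*(b)}\|$ and letting $\eta\downarrow0$, proves the display; the convexity lower bound is what turns the inequality into an equality in the Hadamard case. I expect this step to be the main obstacle: it is exactly where the non-differentiable case departs from the classical bootstrap, since the delta-method expansion must be carried out at the random, $\epsilon$-rescaled base point $\hat{\hat\theta}$ rather than at $\theta$, and only an \emph{upper} bound survives this (which is precisely why the test can be conservative).

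Given the master bound, part~(a) follows quickly. If $\|\theta\|>\epsilon$ then $\max(\hat d,\epsilon)=\hat d$ with probability tending to one, so on $\{\hat d<\hat q_\alpha^*\}$ one has $n^{-1/2}(q_\alpha(T)+o_{\mathbb P}(1))>0$, i.e. $o_{\mathbb P}(1)>-q_\alpha(T)>0$; since $q_\alpha(T)<0$ by hypothesis this event has probability $o(1)$. If $\|\theta\|=\epsilon$, note that $\sqrt n(\hat d-\max(\hat d,\epsilon))=\min\bigl(0,\sqrt n(\hat d-\epsilon)\bigr)\xrightarrow{d}\min(0,T)$ by \eqref{l2} and the continuous mapping theorem, so $\{\hat d<\hat q_\alpha^*\}\subseteq\{\min(0,\sqrt n(\hat d-\epsilon))<q_\alpha(T)+o_{\mathbb P}(1)\}$, and a weak-convergence (Portmanteau) argument together with the continuity of $T$ gives $\limsup\mathbb P(\hat d<\hat q_\alpha^*)\le\mathbb P(\min(0,T)\le q_\alpha(T))=\mathbb P(T\le q_\alpha(T))=\alpha$, the middle equality using $q_\alpha(T)<0$.

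For part~(b), $\|\theta\|<\epsilon$ forces $(\hat{\hat p},\hat{\hat q})=(\tilde p,\tilde q)$ with probability tending to one, so $\|\hat{\hat\theta}\|=\epsilon$ and $\hat d^{*(b)}=\|\hat{\hat\theta}+n^{-1/2}Z_n^{*(b)}\|=\epsilon+O_{\mathbb P^*}(n^{-1/2})$; the conditional law of $\hat d^{*(b)}$ thus concentrates at $\epsilon$, so $\hat q_\alpha^*\to\epsilon$ in probability, while $\hat d\to\|\theta\|<\epsilon$, and therefore $\mathbb P(\hat d<\hat q_\alpha^*)\to1$. For part~(c), Hadamard differentiability makes $d'_\theta$ linear and its subdifferential a singleton, so the one-sided bounds above become two-sided and the master bound upgrades to the equality $\hat q_\alpha^*=\max(\hat d,\epsilon)+n^{-1/2}(q_\alpha(T)+o_{\mathbb P}(1))$ -- i.e. the bootstrap is consistent. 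Then $\sqrt n(\hat d-\hat q_\alpha^*)=\min(0,\sqrt n(\hat d-\epsilon))-q_\alpha(T)+o_{\mathbb P}(1)\xrightarrow{d}\min(0,T)-q_\alpha(T)$, whose law has no atom at $0$ by continuity of $T$, giving $\mathbb P(\hat d<\hat q_\alpha^*)\to\mathbb P(\min(0,T)<q_\alpha(T))=\mathbb P(T<q_\alpha(T))=\alpha$ when $\|\theta\|=\epsilon$; the interior conclusion of (a) is unchanged, which is the assertion of (c).
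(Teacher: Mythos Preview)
Your proposal is correct and shares the paper's overall architecture: both proofs rest on the one-sided master inequality
\[
\sqrt{n}\bigl(\hat d^*-\hat{\hat d}\bigr)\ \le\ d'_\theta\bigl(\sqrt{n}(\hat\theta^*-\hat{\hat\theta})\bigr)+o_{\mathbb P}(1)
\]
and then read off (a)--(c) from the resulting bound on the bootstrap quantile. The genuine difference is \emph{how} this inequality is obtained. The paper (its Lemma~\ref{lem1}) applies the directional delta method separately to $\|\hat\theta^*\|$ and $\|\hat{\hat\theta}\|$, both expanded around the \emph{true} $\theta$, subtracts the two expansions, and then invokes subadditivity of $d'_\theta$ to collapse $d'_\theta(\sqrt n(\hat\theta^*-\theta))-d'_\theta(\sqrt n(\hat{\hat\theta}-\theta))\le d'_\theta(\sqrt n(\hat\theta^*-\hat{\hat\theta}))$. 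Your route instead expands $\|\hat{\hat\theta}+n^{-1/2}Z_n^{*}\|$ directly around the \emph{random} base point $\hat{\hat\theta}$ and transfers the derivative from $\hat{\hat\theta}$ to $\theta$ via upper semicontinuity of the subdifferential of the convex norm. This is slightly more hands-on but has the advantage of needing only consistency $\hat{\hat\theta}\to\theta$; the paper's subtraction trick implicitly uses the stronger input that $\sqrt n(\hat{\hat\theta}-\theta)$ admits a delta-method expansion. For parts (b) and (c) the paper defers to results of \cite{dette2018equivalence} and to the classical bootstrap delta method (Theorem~23.9 in \cite{vaart1998}); you give short self-contained arguments, which is a legitimate alternative. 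One minor slip: your claim that $\hat{\hat\theta}\to\theta$ ``in every regime'' is false under $H_1$ (there $\|\hat{\hat\theta}\|=\epsilon\neq\|\theta\|$), but you never actually use it in that case, so the argument is unaffected.
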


\begin{ex} \label{ex1}
Theorem \ref{boot1} is applicable to the $L^1$- and $L^\infty $- type 
distances
%\begin{align}
%\label{d1}
$d_1 = \| p -q  \|_1 = 
\sum_{i=1}^{k} |p_i - q_i| $
%$~, \\ 
% \label{d2}
and $
   d_\infty = \| p -q  \|_\infty   = \max_{i=1}^{k} |p_i - q_i|.
    $
%\end{align}
In this case $\| \cdot  \|_1$ and $\| \cdot  \|_\infty $ are directionally Hadamard differentiable at any $\theta = p-q \in \R^k$ with derivatives given by
\begin{align}
\label{det13}
               %         d_{\theta}^\prime : \mathbb{R}^k \to \mathbb{R},
               \quad d_{\theta}^\prime (v) =   \sum_{i=1}^{k}  \mathbbm{1}_{ \{ \theta_i \in \mathcal{N}^c \} }  \sgn(\theta_i) v_i + \mathbbm{1}_{ \{  \theta_i \in \mathcal{N} \} } |v_i|~
\end{align}
for $\| \cdot \|_1$, 
where  $ \mathcal{N} := \{ i \in \{1,  \ldots  , k\} \ : \theta_i = 0 \}$, and by  
\begin{align}
\label{det14}
d_{\theta}^\prime (v) =   \max_{i=1}^{k} \{ \max_{i \in \mathcal{E}^{+} } v_i  , \ \max_{i \in \mathcal{E}^{-} } - v_i  \}
\end{align}
for $\| \cdot \|_\infty$,
where $\mathcal{E}^{ \pm } = \{ i \in \{1,  \ldots  , k\} \ : \ \theta_i = \pm d_{\infty} \}$.  
Here \eqref{det13} follows by a straightforward calculation and \eqref{det14} is a consequence of a general result in  \cite{carcamo2020directional}. 
We note that the derivatives are fully Hadamard differentiable at $\theta$ if and only if  $\mathcal{N} = \emptyset$ and the set $\mathcal{E} = \{ i \in \{ 1, ... , k \} : |\theta_i| = d_\infty \}$ consists of exactly one element respectively.

Both derivatives are obviously sublinear. Therefore, they are also convex, and, for both norms, the limiting distribution $T$ defined in \eqref{l2} is a convex functional of a Gaussian variable. Consequently, Corollary 4.4.2 in \cite{bogachev1998gaussian} yields that the corresponding distribution functions are continuous  and Theorem \ref{boot1} is applicable.
\end{ex}

\section{Finite sample properties}

\label{sec3} 
  \def\theequation{3.\arabic{equation}}	
    \setcounter{equation}{0}
  
% \subsection{Simulations}
In this section we empirically verify the theoretical results by means of a small simulation study where we consider the maximum and $L_1$-norm.

 %               \subsubsection{Maximum norm ($d_{\infty}$) (New)}

We start by considering the maximum norm 
$\|\cdot \|_\infty$. We fix the threshold $\epsilon = 0.25$,  the  significance level $\alpha = 0.05$ and consider a multinomial distribution with  $k=6$ classes.  The two vectors of probabilities are given by  
\begin{align}
\label{det15}
                    p &= ( 1-\delta, \tfrac{\delta}{5}, \ldots  , \tfrac{\delta}{5})^\top ~,~~
                    q = (\delta, \tfrac{1-\delta}{5}, \ldots  , \tfrac{1-\delta}{5})^\top ,
                \end{align}
                where $\delta \in (0,1)$.
                Note that  $\| p - q \|_{\infty} = |1 - 2 \delta|$  and that the maximum is attained at the first class only, such that $|\mathcal{E}| = 1$.
                As pointed out in Section \ref{sec2} the mapping
                $\| \cdot \|_\infty $ is Hadamard differentiable 
                at $p-q$  and part (c) of Theorem \ref{boot1} is applicable in this case. 
                As a second example we  consider the vectors 
                \begin{align}
                \label{det16}
                    p &= ( \tfrac{1-\delta}{2}, \tfrac{1-\delta}{2}, \tfrac{\delta}{4}, \ldots , \tfrac{\delta}{4})^\top  ~,~~
                    q = (\tfrac{\delta}{2}, \tfrac{\delta}{2}, \tfrac{1-\delta}{4}, \ldots  , \tfrac{1-\delta}{4})^\top ,
                \end{align}
                where $\delta \in (0,1)$. In this case  we have $\| p - q \|_\infty  = |0.5 - \delta|$, which  is attained in  the first and second class. Therefore we 
                have  $|\mathcal{E}| = 2$, and we expect the test \eqref{test} to be conservative at the boundary $\|p-q\|_\infty = \epsilon = 0.25$ (see part (a) of Theorem \ref{boot1}).
                The results of this  simulation are displayed in the top row of Figure \ref{fig11}. Overall, the simulation results are consistent with the theoretical findings in Theorem \ref{boot1}. We  observe 
               in both scenarios that for an increasing sample size 
               the rejection probability of the test approaches one under the alternative and zero in the interior of the null hypothesis ($\|p - q \|_\infty > \epsilon$). On the boundary 
               of the  hypotheses ($\|p - q \|_\infty = \epsilon$) some differences between the two scenarios are visible.  In the case \eqref{det15} the significance level  is approximated for an increasing sample size as predicted by part (c) of Theorem \ref{boot1}, see the left upper panel in Figure \ref{fig11}. 
               In the non
               Hadamard differentiable case  corresponding to \eqref{det16} the rejection probability stays below $\alpha$ even for an increasing sample size (see the right upper  panel  and part (a) of Theorem \ref{boot1}).

                In the lower part of  Figure \ref{fig11} we display results for a comparison of the probabilities using the $L^1$-norm.  Here we consider the probability vectors
                \begin{equation}\label{det17}
                \begin{split}
                    p &= ( 0.1, 0.1, 0.1 , 0.2, 0.1, 0.4 )^T \\
                    q &= ( 0.125, 0.125 , 0.125 , 0.125 , 0.1 + \delta , 0.4 - \delta )^T,
                \end{split}
                \end{equation}
                where $\delta \in (0, 0.4)$. In this case, $\| p - q \|_1  = 0.15 + 2 \delta$ and all  coordinates differ such that                 the set in \eqref{det13}  satisfies $\mathcal{N} = \emptyset$. As a second example, we investigate the vectors 
\begin{equation}\label{det18}
                \begin{split}
                    p &= ( 0.1 , 0.1 , 0.1 , 0.2 , 0.1 , 0.4 )^T \\
                    q &= ( 0.1 , 0.125, 0.125, 0.15 , 0.1 + \delta , 0.4 - \delta )^T,
                \end{split}
\end{equation}
                where $\delta \in (0,0.4)$, $\| p - q \|_1  = 0.1 + 2 \delta$. Here the first coordinates are identical such that ${\cal N} =\{ 1 \} $.
        The corresponding rejection probabilities are displayed in the lower part of Figure \ref{fig11}. For an increasing sample size  the rejection probability of the test \eqref{test} increases if $\| p - q \|_1 < \epsilon $  (interior of the alternative)  and decreases if  $\| p - q \|_1 > \epsilon $ (interior of the null hypothesis). Moreover, in the non-differentiable case 
        \eqref{det18} the  test is slightly conservative on the boundary $\| p - q \|_1 = \epsilon $ (right lower panel), while it approximates the nominal level $0.05$  even for small sample sizes in the differentiable case (left lower panel in Figure \ref{fig11}). These results reflect 
        the theoretical findings in Theorem \ref{boot1} (a) and (c), respectively. 
                \begin{figure}[H]
                    \centering
                    \includegraphics[width=7cm, height=4.5cm]{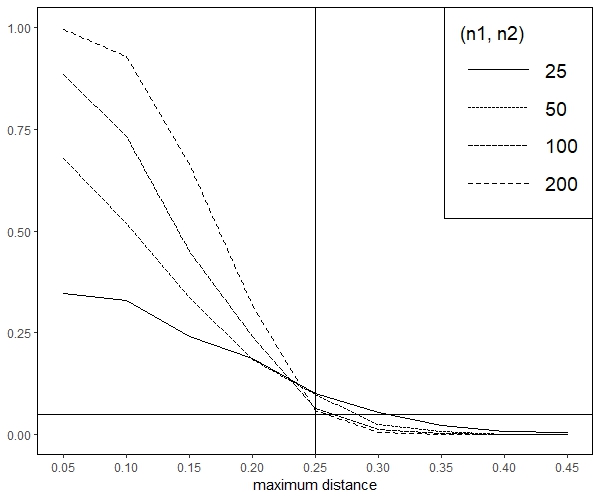}~~~
                    \includegraphics[width=7cm, height=4.5cm]{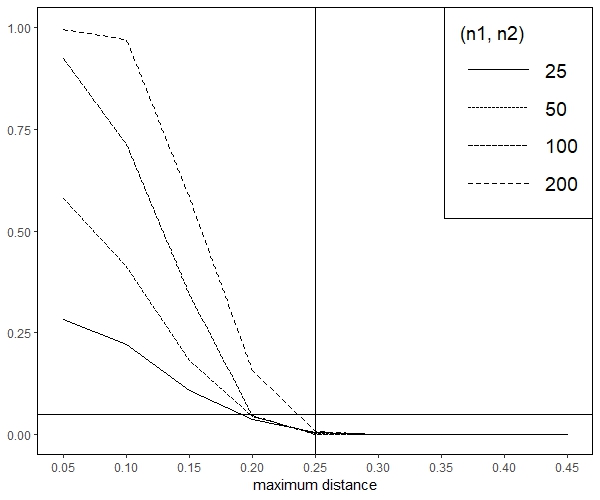}
                    ~~~~ 
                    \includegraphics[width=7cm, height=4.5cm]{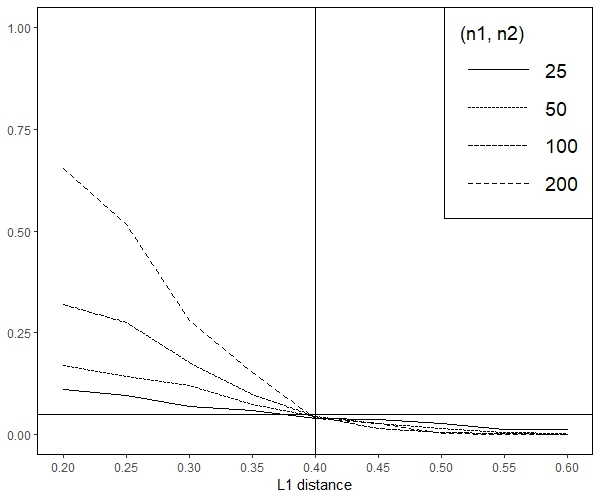}~~~
                    \includegraphics[width=7cm, height=4.5cm]{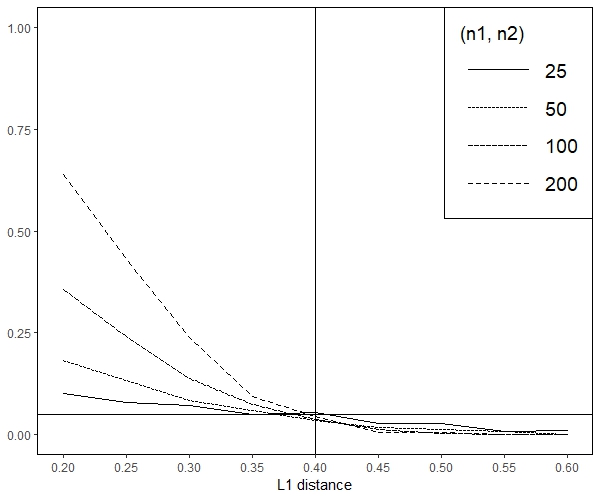}

                    \caption{\it Simulated rejection probabilities of the test \eqref{test}. Upper part: maximum deviation $\| \cdot \|_\infty $ for the probability vectors \eqref{det15} (left) and \eqref{det16} (right).  
                   Lower part: $L^1$-distance $\| \cdot \|_1 $ for the probability vectors \eqref{det17} (left) and \eqref{det18} (right).} 
                    \label{fig11}
                \end{figure}

            %    \lk{(1) Überschreitung des Niveaus bei dmax, linear. Ich verstehe nicht ganz warum das hier passiert. Ich untersuche, ob ich was falsch mache. (2) Geringe Power bei $L_1$: Man könnte Sample size noch mehr erhöhen oder etwas weiter in der Alternative simulieren, also für $d_\infty = 0.2$? Ich bewege mich hier nur 0.1 vom Rand weg in beide Richtungen. In den vorigen Simulatinoen habe ich größere Abstände genommen.}

% \subsection{Application}

\section{Appendix:  Proof of  Theorem \ref{boot1}}
        	\label{sec4} 
  \def\theequation{4.\arabic{equation}}	
    \setcounter{equation}{0}

 First, we prove a key lemma that is used for the proof of \eqref{d5}. 
 For this purpose we introduce the notations $\theta = p - q$, $\hat{\hat{\theta}} = \hat{\hat{p}} - \hat{\hat{q}}$, $\hat{\theta}^* = \hat{p}^* - \hat{q}^*$, 
 $\hat d^* = \| \hat \theta^* \|$,  $\hat{\hat{d}} =  \| \hat{\hat{\theta}} \|$ and denote the data by $\mathcal{Y}$.

\begin{Lem}
\label{lem1}
{ \it  Define $\tilde T_n^* := d_{\theta}^\prime  ( \sqrt{n} ( \hat{\theta}^* - \hat{\hat{\theta}} ) ) $, then $\tilde T_n^* \tod T$ conditionally on $\mathcal{Y}$ in probability, and  
    \begin{align}
        \sqrt{n}( \hat d^* - \hat{\hat d} ) \leq \tilde T_n^* + o_{\mathbb P}(1). \label{l10}
    \end{align}
}
\end{Lem}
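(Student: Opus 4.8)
The plan is to establish the two assertions in turn, both resting on a conditional central limit theorem for the multinomial bootstrap combined with the directional delta method and the sublinearity of $d_\theta'$. A preliminary observation, used throughout, is that in the regime in which the lemma is applied (the null hypothesis \eqref{hyp1}) the estimator $\hat{\hat{\theta}}$ is $\sqrt n$-consistent for $\theta$: in the interior $\|\theta\| > \epsilon$ one eventually has $\hat d \geq \epsilon$, so $\hat{\hat{\theta}} = \hat p - \hat q$ and $\sqrt n(\hat{\hat{\theta}} - \theta) = \sqrt n((\hat p - \hat q) - \theta) = O_{\mathbb P}(1)$; on the boundary $\|\theta\| = \epsilon$ the true pair $(p,q)$ lies in the constraint set $\{\|p-q\| = \epsilon\}$, and then both candidate values $\hat p - \hat q$ and $\tilde p - \tilde q$ of $\hat{\hat{\theta}}$ are $\sqrt n$-consistent for $\theta$. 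In either case $\hat{\hat{\theta}} \topr \theta$ and $V_n := \sqrt n(\hat{\hat{\theta}} - \theta) = O_{\mathbb P}(1)$.

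For the convergence $\tilde T_n^* \tod T$, I would argue that, conditionally on $\mathcal Y$, the draws $X^* \sim \mathrm{Multi}(n_1,\hat{\hat{p}})$ and $Y^* \sim \mathrm{Multi}(n_2,\hat{\hat{q}})$ produce, by the usual conditional central limit theorem for multinomials, $\sqrt n(\hat{\theta}^* - \hat{\hat{\theta}}) \tod Z$ conditionally on $\mathcal Y$ in probability; here the conditional covariance is assembled from $\hat{\hat{p}},\hat{\hat{q}}$, which converge to $p,q$, so the conditional limit law is that of $Z$ with covariance $\Sigma$. Since $d_\theta'$ is continuous (part of the definition of directional Hadamard differentiability), the conditional continuous mapping theorem yields $\tilde T_n^* = d_\theta'(\sqrt n(\hat{\theta}^* - \hat{\hat{\theta}})) \tod d_\theta'(Z) = T$ conditionally on $\mathcal Y$ in probability. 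As a by-product, conditional tightness of $\sqrt n(\hat{\theta}^* - \hat{\hat{\theta}})$ gives $W_n := \sqrt n(\hat{\theta}^* - \hat{\hat{\theta}}) = O_{\mathbb P}(1)$ unconditionally.

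For the inequality \eqref{l10}, write $\sqrt n(\hat{\theta}^* - \theta) = W_n + V_n$, so that both $W_n + V_n$ and $V_n$ are $O_{\mathbb P}(1)$. Directional Hadamard differentiability of $\|\cdot\|$ at $\theta$ is exactly what is needed for the stochastic expansion $\sqrt n(\|\vartheta_n\| - \|\theta\|) = d_\theta'(\sqrt n(\vartheta_n - \theta)) + o_{\mathbb P}(1)$ for any sequence with $\sqrt n(\vartheta_n - \theta) = O_{\mathbb P}(1)$, since the convergence defining $d_\theta'$ is uniform over directions in compact sets, which transfers the pointwise statement to an $o_{\mathbb P}(1)$ remainder under tightness. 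Applying this to $\vartheta_n = \hat{\theta}^*$ and to $\vartheta_n = \hat{\hat{\theta}}$ and subtracting gives
\begin{align*}
\sqrt n(\hat d^* - \hat{\hat d}) = d_\theta'(W_n + V_n) - d_\theta'(V_n) + o_{\mathbb P}(1).
\end{align*}
Finally, $d_\theta'$ is sublinear, hence subadditive, so $d_\theta'(W_n + V_n) \leq d_\theta'(W_n) + d_\theta'(V_n)$, and therefore $\sqrt n(\hat d^* - \hat{\hat d}) \leq d_\theta'(W_n) + o_{\mathbb P}(1) = \tilde T_n^* + o_{\mathbb P}(1)$, which is \eqref{l10}. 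When $d_\theta'$ is linear, subadditivity becomes additivity and the inequality is an equality, consistent with part (c) of Theorem \ref{boot1}.

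The step I expect to be the main obstacle is the $\sqrt n$-consistency of the constrained estimator $\hat{\hat{\theta}}$ on the boundary $\|\theta\| = \epsilon$, i.e. that restricting the MLE to the surface $\{\|p-q\| = \epsilon\}$ perturbs it by at most $O_{\mathbb P}(n^{-1/2})$. This requires a separate argument — a constrained M-estimation / Lagrangian expansion exploiting that the log-likelihood is locally regular in the relevant directions and that $\theta$ already satisfies the constraint — and some care is warranted because the constraint surface need not be smooth at a point where the norm fails to be differentiable. The conditional-in-probability statements (the bootstrap central limit theorem and the conditional continuous mapping theorem) are the other point to handle carefully, but these are by now standard.
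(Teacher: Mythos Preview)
Your proposal is correct and follows essentially the same route as the paper: both arguments apply the directional delta-method expansion $\sqrt{n}(\|\vartheta_n\|-\|\theta\|)=d_\theta'(\sqrt{n}(\vartheta_n-\theta))+o_{\mathbb P}(1)$ separately to $\vartheta_n=\hat\theta^*$ and $\vartheta_n=\hat{\hat\theta}$, subtract, and invoke subadditivity of $d_\theta'$, while the conditional convergence $\tilde T_n^*\tod T$ comes from the bootstrap CLT plus the continuous mapping theorem. You are right to flag the $\sqrt{n}$-consistency of the constrained estimator on the boundary as the step requiring independent justification; the paper simply asserts the analogue (its display \eqref{l12}) without proof, so your identification of this as the main obstacle is apt rather than a deviation from the paper's strategy.
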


\begin{proof}[Proof of Lemma \ref{lem1}:]
  %  Define $\theta = p - q$, $\hat{\hat{\theta}} = \hat{\hat{p}} - \hat{\hat{q}}$, $\hat{\theta}^* = \hat{p}^* - \hat{q}^*$. 
  We know that 
    \begin{align}
        &\sqrt{n}( \hat{\hat{\theta}} - \theta ) \tod Z ~,~~
        \sqrt{n}( \hat{\theta}^{*} - \hat{\hat{\theta}}) \tod Z \quad \text{conditionally on $\mathcal{Y}$.} \label{l12}
    \end{align}
By the same arguments as given in the proof of Theorem 23.9 in \cite{vaart1998}, we obtain that both sequences $\sqrt{n}( \hat{\hat{\theta}} - \theta )$ and $\sqrt{n}( \hat{\theta}^{*} - \theta )$ converge unconditionally in distribution. Applying the delta method for directionally Hadamard  differentiable functions (see Theorem 2.1 in \cite{fang2019inference}), we therefore obtain for both sequences 
\begin{align*}
     &\sqrt{n}( \| \hat{\hat{\theta}} \| - \| \theta  \| ) = d_\theta^\prime  ( \sqrt{n}( \hat{\hat{\theta}} - \theta ) )  + o_{\mathbb P}(1). \\
     &\sqrt{n}( \| \hat{\theta}^* \| - \| \theta \| ) = d_\theta^\prime  ( \sqrt{n}( \hat{\theta}^* - \theta ) ) + o_{\mathbb P}(1).
\end{align*}
Subtracting the first equation from the second and then using sub-additivity of the directional Hadamard  derivative yields
\begin{align*}
   \sqrt{n}( \hat d^* - \hat{\hat d} ) &= \sqrt{n} ( \| \hat{\theta}^* \| - \| \hat{\hat{\theta}}  \| ) \\
   &= d_{\theta}^\prime  ( \sqrt{n}( \hat{\theta}^* - \theta ) ) - d_\theta^\prime  ( \sqrt{n}( \hat{\theta}^* - \theta ) )  +  o_{\mathbb P}(1) 
    \leq  \tilde T_n^* \ + \ o_{\mathbb P}(1).
\end{align*}
Finally, the latter convergence in \eqref{l12} together with the  continuous mapping theorem
 yield $\tilde T_n^* \tod T$ (note that the directional Hadamard  derivative is continuous, see for example Proposition 3.1 in \cite{shapiro1990concepts} ).
\end{proof}

We continue showing the following two preliminary results that are used for the remainder of the proof:
\begin{align}
    \forall \delta > 0:  \lim_{n \to \infty}  \mathbb P \big( \sqrt{n} ( \hat q_{\alpha}^* - \hat{\hat{d}} ) > q_{\alpha+\delta} \big) = 0. \label{l4} \\
    \lim_{n \to \infty} \mathbb P \big( \hat{\hat{d}} < \hat q_{\alpha}^* \big) = 0, \label{l5}
\end{align}
where $q_{\alpha}$ denotes the $\alpha$-quantile of $T$.

\begin{proof}[Proof of \eqref{l4} and \eqref{l5}]
Let $F_n$ be the distribution function of $\tilde T_n^*$ conditional on the data and $F_n^*$ be the distribution function of $\sqrt{n}( \hat d^* - \hat{\hat d})$ conditional on the data.

 By assumption $q_{\alpha} < 0 $ and  therefore there  exists  a $t_0 < 0$ with $F(t_0) < \alpha$. 
Since $\tilde T_n^* \tod T$ conditionally on $\mathcal{Y}$ in probability, we get 
%\begin{align}
%\label{l16}
 $   \sup_{t \geq t_0} |F_n(t) - F(t)| \topr 0.$
%\end{align}
From \eqref{l10} we can conclude 
$
  \mathbb P ( F_n^*(t) \geq F_n(t - \epsilon ) \ \forall t) \ton 1
$    for all $\epsilon > 0$.
Since $F$ is uniformly continuous on $[t_0, \infty)$, we therefore obtain
\begin{align}
    \forall \epsilon > 0: \ \mathbb P ( F_n^*(t) \geq F(t) - \epsilon \ \forall t \geq t_0 ) \ton 1. \label{l11}
\end{align} 
Now, let $\delta > 0 $ be arbitrary. Since $F(t_0) < \alpha$, we have $q_{\alpha + \delta} > t_0$. The convergence \eqref{l11} then yields in particular for $t = q_{\alpha + \delta}$ and $\epsilon = \delta/3$ that 
\begin{align}
    \mathbb P( F_n^*(q_{\alpha + \delta}) < F(q_{\alpha + \delta}) - \delta / 3 ) = \mathbb P( F_n^*(q_{\alpha + \delta}) < \alpha + (2/3)\delta  ) \ton 0.
\end{align}
Therefore, we can conclude that
\begin{align*}
    \mathbb P( \sqrt{n} ( \hat q_{\alpha}^* - \hat{\hat{d}} ) > q_{\alpha + \delta}  ) &\leq \mathbb P ( F_n^*( \sqrt{n} ( \hat q_{\alpha}^* - \hat{\hat{d}} ) ) \geq F_n^* ( q_{\alpha + \delta} )  ) \\
    &= \mathbb P ( \alpha \geq F_n^*(q_{\alpha + \delta} ) )  \leq \mathbb P( F_n^*(q_{\alpha+\delta}) < \alpha + (2/3)\delta )  = o(1),
\end{align*}
which proves \eqref{l4}.  For a proof of \eqref{l5} note that we  have assumed $q_{\alpha} < 0$. Therefore   we can choose $\delta > 0$ small enough such that $q_{\alpha + \delta} < 0$ as well which gives, observing  \eqref{l4}, 
    \begin{align*}
        \mathbb P( \hat{\hat{d}} < \hat q_{\alpha}^* ) = \mathbb P ( \sqrt{n}( \hat q_{\alpha}^* - \hat{\hat{d}} ) > 0 ) \leq \mathbb P ( \sqrt{n}( \hat q_{\alpha}^* - \hat{\hat{d}} ) > q_{\alpha + \delta} ) \ton 0.
    \end{align*}
\end{proof}

Using \eqref{l4} and \eqref{l5}, we can now complete the proof of  Theorem \ref{boot1} starting with part (a). 
If  \underline{\textit{ $d > \epsilon$}}, 
 we use
\begin{align*}
    \mathbb P( \hat d < \hat q_{\alpha}^* ) = \mathbb P( \hat d < \hat q_{\alpha}^* , \ \hat d \geq \epsilon ) + \mathbb P( \hat d < \hat q_{\alpha}^* , \ \hat d < \epsilon ).
\end{align*}
The first term satisfies
$
    \mathbb P( \hat d < \hat q_{\alpha}^* , \ \hat d \geq \epsilon ) = \mathbb P( \hat d < \hat q_{\alpha}^*, \ \hat d = \hat{\hat{d}} ) \leq \mathbb P ( \hat{\hat{d}} < \hat{q}_{\alpha}^* ) \ton 0,
$
by  \eqref{l5}. The second 
term satisfies
\begin{align*}
    \mathbb P( \hat d < \hat q_{\alpha}^* , \ \hat d < \epsilon ) \leq \mathbb P ( \hat d < \epsilon ) = \mathbb P ( \sqrt{n}(\hat d - d ) < \sqrt{n} ( \epsilon - d ) ) \ton 0,
\end{align*}
since $\sqrt{n}(\hat d - d)$ converges weakly by the directional delta method and $\sqrt{n}(\epsilon - d) \ton - \infty$ by the assumption $d > \epsilon$. All in all, this gives 
$
   \lim_{n \to \infty} \mathbb P( \hat d < \hat q_{\alpha}^* ) = 0
$, if $d > \epsilon$.

In the case  \underline{\textit{$d = \epsilon $}} we use  
\begin{align*}
    \mathbb P ( \hat d < \hat q_{\alpha}^* ) = \mathbb P( \hat d < \hat q_{\alpha}^* , \ \hat{\hat{d}} = \epsilon ) + \mathbb P( \hat d < \hat q_{\alpha}^* , \ \hat{\hat{d}} > \epsilon ). % \label{l6}
\end{align*}
To deal with  the first term we introduce  the notation 
$ \hat p_{\alpha}^*:= \sqrt{n}( \hat q_{\alpha}^* - \hat{\hat{d}} )$
and obtain for any $\delta > 0$
\begin{align}
\nonumber
    \mathbb P( \hat d < \hat q_{\alpha}^* , \ \hat{\hat{d}} = \epsilon ) &= \mathbb P( \sqrt{n} ( \hat d - d ) < \sqrt{n}( \hat q_{\alpha}^* - \hat{\hat{d}} ) , \ \hat{\hat{d}} = \epsilon ) \\
    &\leq \mathbb P( \sqrt{n} ( \hat d - d ) < \sqrt{n}( \hat q_{\alpha}^* - \hat{\hat{d}} )
    ) \nonumber \\
    &= \mathbb P ( \sqrt{n} ( \hat d - d ) < \hat p_{\alpha}^{*}, \ \hat p_{\alpha}^{*} \leq q_{\alpha + \delta} ) + \mathbb P ( \sqrt{n} ( \hat d - d ) < \hat p_{\alpha}^{*}, \ \hat p_{\alpha}^{*} > q_{\alpha + \delta} )~.  \label{l7}
\end{align}
 For the first sequence in \eqref{l7}, we have
\begin{align}
    \mathbb P ( \sqrt{n} ( \hat d - d ) < \hat p_{\alpha}^{*}, \ \hat p_{\alpha}^{*} \leq q_{\alpha + \delta} ) \leq \mathbb P( \sqrt{n} ( \hat d - d ) \leq q_{\alpha + \delta} ) \ton \alpha + \delta, \label{l8}
\end{align}
by the weak convergence of $\sqrt{n}( \hat d - d )$ and since the limit distribution function is continuous at $q_{\alpha + \delta}$.  For  the second term in \eqref{l7}, we have
\begin{align}
    \mathbb P ( \sqrt{n} ( \hat d - d ) < \hat p_{\alpha}^{*}, \ \hat p_{\alpha}^{*} > q_{\alpha + \delta} ) \leq \mathbb P ( \hat p_{\alpha}^* > q_{\alpha + \delta} ) \ton 0, \label{l9}
\end{align}
by \eqref{l4}. Putting \eqref{l8} and \eqref{l9} together, we obtain
$
    \limsup_{n \to \infty} \mathbb P( \hat d < \hat q_{\alpha}^* ) \leq \alpha + \delta~
$
 for any $\delta > 0$. This 
is equivalent to $\limsup_{n \to \infty} \mathbb P( \hat d < \hat q_{\alpha}^* ) \leq \alpha$, which completes the proof of part (a). 
\medskip 

The proof of part (b) follows by similar arguments as given in the proof of Theorem 2 in \cite{dette2018equivalence}  (note that the map $(p,q) \to \| p - q \| $ is uniformly continuous). 
\medskip 

For a proof of part (c), we first determine the asymptotic distribution of the bootstrap test statistic $\hat d^*$. To this end, recall that the second convergence in \eqref{l12} holds, that is $\sqrt{n}( \hat \theta^* - \hat{\hat \theta} ) \tod Z$ conditionally on $\mathcal{Y}$ in probability. Since we assume that the directional Hadamard derivative of the norm $ \| \cdot \|$ 
is linear, we can apply the bootstrap delta method for Hadamard  differentiable functions as stated in Theorem 23.9 in \cite{vaart1998} in order to obtain
\begin{align}
\label{l1}
    \sqrt{n} ( \| \hat \theta^{*} \| - \| \hat{\hat{\theta}} \| ) \tod  d_{\theta}^\prime (Z)
\end{align}
conditionally on $\mathcal{Y}$ in probability. Employing the convergences \eqref{l2} and \eqref{l1}, we can now proceed with exactly the same arguments as given in the proof of Theorem 3.3 a) cases 1 and 2 in \cite{dette2018equivalence} in order to finish the proof.

\bigskip\noindent
\textbf{Acknowledgements:}  
%  The authors would like to thank two unknown referees for their constructive comments on an earlier version of this paper.
This research is supported by the European Union through the European Joint Programme on Rare Diseases under the European Union's Horizon 2020 Research and Innovation Programme Grant Agreement Number 825575.

    \bibliographystyle{apalike}
    \setlength{\bibsep}{2pt}
    \bibliography{bibliography.bib}

\begin{thebibliography}{}

\bibitem[Alba-Fern{\'a}ndez and Jim{\'e}nez-Gamero, 2023]{Alba-Fernndez2023}
Alba-Fern{\'a}ndez, M.~V. and Jim{\'e}nez-Gamero, M.~D. (2023).
\newblock {\em Equivalence tests for multinomial data based on
  $\phi$-divergences}, pages 121--129.
\newblock Springer International Publishing, Cham.

\bibitem[Bogachev, 1998]{bogachev1998gaussian}
Bogachev, V.~I. (1998).
\newblock {\em Gaussian measures}.
\newblock Number~62. American Mathematical Society.

\bibitem[C{\'a}rcamo et~al., 2020]{carcamo2020directional}
C{\'a}rcamo, J., Cuevas, A., and Rodr{\'i}guez, L.-A. (2020).
\newblock {Directional differentiability for supremum-type functionals:
  Statistical applications}.
\newblock {\em Bernoulli}, 26(3):2143 -- 2175.

\bibitem[Dette et~al., 2018]{dette2018equivalence}
Dette, H., M{\"o}llenhoff, K., Volgushev, S., and Bretz, F. (2018).
\newblock Equivalence of regression curves.
\newblock {\em Journal of the American Statistical Association},
  113(522):711--729.

\bibitem[Fang and Santos, 2019]{fang2019inference}
Fang, Z. and Santos, A. (2019).
\newblock Inference on directionally differentiable functions.
\newblock {\em The Review of Economic Studies}, 86(1):377--412.

\bibitem[Frey, 2009]{frey2009exact}
Frey, J. (2009).
\newblock An exact multinomial test for equivalence.
\newblock {\em Canadian Journal of Statistics}, 37(1):47--59.

\bibitem[Ostrovski, 2017]{ostr2017}
Ostrovski, V. (2017).
\newblock Testing equivalence of multinomial distributions.
\newblock {\em Statistics \& Probability Letters}, 124:77--82.

\bibitem[Ostrovski, 2018]{OSTROVSKI201861}
Ostrovski, V. (2018).
\newblock Testing equivalence to families of multinomial distributions with
  application to the independence model.
\newblock {\em Statistics \& Probability Letters}, 139:61--66.

\bibitem[Shapiro, 1990]{shapiro1990concepts}
Shapiro, A. (1990).
\newblock On concepts of directional differentiability.
\newblock {\em Journal of optimization theory and applications}, 66:477--487.

\bibitem[Shapiro, 1991]{shapiro1991asymptotic}
Shapiro, A. (1991).
\newblock Asymptotic analysis of stochastic programs.
\newblock {\em Annals of Operations Research}, 30(1):169--186.

\bibitem[Van~der Vaart, 1998]{vaart1998}
Van~der Vaart, A.~W. (1998).
\newblock {\em Asymptotic Statistics. Cambridge Series in Statistical and
  Probabilistic Mathematics}.
\newblock Cambridge University Press, Cambridge.

\bibitem[Wellek, 2010]{wellek2010testing}
Wellek, S. (2010).
\newblock {\em Testing statistical hypotheses of equivalence and
  noninferiority}.
\newblock CRC Press.

\end{thebibliography}

\end{document}